\documentclass[10pt]{article}
\usepackage{amssymb}
\usepackage{graphicx}
\usepackage{xcolor} 
\usepackage{tensor}
\usepackage{fullpage} 
\usepackage{amsmath}
\usepackage{amsthm}
\usepackage{verbatim}
\providecommand{\bysame}{\leavevmode\hbox to3em{\hrulefill}\thinspace}
\providecommand{\MR}{\relax\ifhmode\unskip\space\fi MR }

\providecommand{\href}[2]{#2}
\DeclareFontFamily{U}{mathx}{\hyphenchar\font45}
\DeclareFontShape{U}{mathx}{m}{n}{
	<5> <6> <7> <8> <9> <10>
	<10.95> <12> <14.4> <17.28> <20.74> <24.88>
	mathx10
}{}
\DeclareSymbolFont{mathx}{U}{mathx}{m}{n}
\DeclareFontSubstitution{U}{mathx}{m}{n}
\DeclareMathAccent{\widecheck}{0}{mathx}{"71}
\DeclareMathAccent{\wideparen}{0}{mathx}{"75}

\usepackage{enumitem}
\setlist[enumerate]{leftmargin=1.5em}
\setlist[itemize]{leftmargin=1.5em}

\setlength{\marginparwidth}{.6in}
\setlength{\marginparsep}{.2in}

\usepackage{seqsplit}

\definecolor{green}{rgb}{0,0.8,0} 


\newtheorem{maintheorem}{Theorem}

\newtheorem{theorem}{Theorem}[section]
\newtheorem{corollary}[theorem]{Corollary}
\newtheorem{lemma}[theorem]{Lemma}
\newtheorem{proposition}[theorem]{Proposition}

\theoremstyle{definition}

\theoremstyle{remark}
\newtheorem{remark}[theorem]{Remark}

\numberwithin{equation}{section}

\makeatletter

\newcommand{\be}{\begin{equation}}
\newcommand{\ee}{\end{equation}}
\renewcommand{\a}{\alpha}

\newcommand{\yy}{\mathcal{Y}^1} 
\newcommand{\nrm}{\@ifstar{\nrmb}{\nrmi}}
\newcommand{\nrmi}[1]{\Vert{#1}\Vert}
\newcommand{\nrmb}[1]{\left\Vert{#1}\right\Vert}
\newcommand{\abs}{\@ifstar{\absb}{\absi}}
\newcommand{\absi}[1]{\vert{#1}\vert}
\newcommand{\absb}[1]{\left\vert{#1}\right\vert}
\newcommand{\brk}{\@ifstar{\brkb}{\brki}}
\newcommand{\brki}[1]{\langle{#1}\rangle}
\newcommand{\brkb}[1]{\left\langle{#1}\right\rangle}
\newcommand{\set}{\@ifstar{\setb}{\seti}}
\newcommand{\seti}[1]{\{#1\}}
\newcommand{\setb}[1]{\left\{ #1\right\}}
\makeatother

\newcommand{\nnrm}[1]{{\vert\kern-0.25ex\vert\kern-0.25ex\vert #1 
    \vert\kern-0.25ex\vert\kern-0.25ex\vert}}

\newcommand{\VERT}[1]{{\left\vert\kern-0.25ex\left\vert\kern-0.25ex\left\vert #1 
    \right\vert\kern-0.25ex\right\vert\kern-0.25ex\right\vert}}

\newcommand{\lap}{\Delta}

\newcommand{\ud}{\mathrm{d}}
\newcommand{\rd}{\partial}
\newcommand{\nb}{\nabla}



\newcommand{\alp}{\alpha}

\newcommand{\gmm}{\gamma}
\newcommand{\Gmm}{\Gamma}

\newcommand{\Lmb}{\Lambda}

\newcommand{\tht}{\theta}

\newcommand{\Omg}{\Omega}



\newcommand{\bbR}{\mathbb R}

\newcommand{\bbT}{\mathbb T}


\newcommand{\calX}{\mathcal X}
\newcommand{\calY}{\mathcal Y}



\setcounter{tocdepth}{2}
\setcounter{secnumdepth}{3}

\vfuzz2pt 
\hfuzz2pt 


\begin{document}

\title{Active vector models generalizing 3D Euler \\
and electron--MHD equations}
\author{Dongho Chae\thanks{Department of Mathematics, Chung-Ang University, Heuk-Seok Ro 84,
		Seoul, Republic of Korea 156-756. E-mail: dchae@cau.ac.kr} \and In-Jee Jeong\thanks{Department of Mathematical Sciences and RIM, Seoul National University, Gwanak-gu, Gwanak 1, Seoul, Republic of Korea 08826. E-mail: injee\_j@snu.ac.kr}}

\date{\today}



\maketitle


\begin{abstract}
We introduce an active vector system, which generalizes both the 3D Euler equations and the electron--magnetohydrodynamic equations (E--MHD). We may as well view the system as singularized systems  for  the 3D Euler equations, in which case 
the equations of  (E--MHD) correspond to the  order two more singular one than the 3D Euler equations. The generalized  surface quasi-geostrophic equation (gSQG) can be also embedded into a special case of  our system when the unknown functions are  constant in one coordinate direction. We investigate some basic properties of this system as well as the conservation laws. In the case when the system corresponds up  to order one more singular than the 3D Euler equations, we prove local well-posedness in the standard Sobolev spaces. The proof crucially depends on a sharp commutator estimate similar to the one used for (gSQG) in \cite{CCCGW}. Since the system covers many areas of both physically and mathematically interesting cases, one can expect that there are various related problems  to be investigated, parts of which are discussed here.
\end{abstract}

\section{Introduction}

\subsection{The generalized SQG equations}

The \textit{generalized surface quasi-geostrophic} (gSQG) {equation is an active scalar system given in $\mathbb{R}^2$}  as : 
\begin{equation}\label{eq:gSQG} \tag{gSQG}
	\left\{
	\begin{aligned}
		& \rd_t \tht + v \cdot \nb \tht = 0, \\
		& v = \nb^\perp \Gmm \tht, 
	\end{aligned}
	\right.
\end{equation} with $\tht(t,\cdot):\Omg\rightarrow\bbR$, where $\Gmm$ is a Fourier multiplier. In the recent years, the model \eqref{eq:gSQG} together with its dissipative analogues has been intensively studied, with topics including local and global well-posedness (\cite{CCW,CCW2,CCW3,CCCGW,Oh1,Oh2,con1, con2, con2, cfm, Gan,Li-SQG,WJ,KhoRo1,KhoRo2,HSZ1,HSZ2,CNgu,Ngu,ElgindiA,CCGM,CJO1,CJO2,FV,Kwon,Mar}), finite and infinite time singularity formation (\cite{KN,HeKi,KRYZ,KYZ,Den,KS,Z,X,Denisov-merging}), rotating and traveling-wave solutions (\cite{CGI,GOMEZ,CCG-TRAN,HMsqg,CCG-duke,CCG,HHH,CQZZ,CQZZ2,EJS,EJS1,EJS2}), non-uniqueness (\cite{IV,CKL,BSV,Shv}), and so on. For any $\Gmm$, the system describes the transport of a scalar by an incompressible flow. The primary motivation for studying the system \eqref{eq:gSQG} comes from   {the fact that  (gSQG) interpolates the two physically important cases  for  $\Gmm = (-\lap)^{-1}$ and $\Gmm =(-\lap)^{-\frac12}$, where \eqref{eq:gSQG} corresponds to {\em 2D Euler equations} and the {\em surface quasi-geostrophic (SQG) equations}}, respectively. The SQG equations were derived in \cite{CMT1,CMT2} from the system for non-homogeneous three-dimensional half-space rotating fluid under the so-called \textit{quasi-geostrophic} approximation. Various interesting phenomena occur for solutions to \eqref{eq:gSQG} with different choice of multipliers; see recent surveys \cite{con1,Ki-sur,Ki-sur2} as well as aforementioned references.

\subsection{The active vector system}
In this note, we introduce  {an active vector system}  defined in  {$\Bbb R^3$},  which not only generalizes two physically important systems, namely the \textit{3D Euler  equations} and the \textit{electron--MHD equations} (also referred to as the Hall equations) but also contains the entire family of (gSQG) as sub-systems. The system we suggest is given by \begin{equation}\label{eq:EH}
	\left\{
	\begin{aligned}
		& \rd_t B + \nb\times ( ( \nb\times \Gmm [B] ) \times B) = 0,  \\
		& \nabla \cdot B = 0, \\ 
	\end{aligned}
	\right.
\end{equation} where $B(t,\cdot):\bbR^3\rightarrow\bbR^3$ and $\Gmm$ is a Fourier multiplier whose symbol will be denoted by $\gmm=\gmm(|\xi|)\ge 0$. Since $B$ is divergence-free, \eqref{eq:EH} can be written as an \textit{active vector} form: \begin{equation}\label{eq:EH-active}
\left\{
\begin{aligned}
	& \rd_t B + V \cdot \nb B = B\cdot \nb V,  \\
	& V = -\nb\times \Gmm[B], \\
	& \nabla \cdot B = 0. 
\end{aligned}
\right.
\end{equation} The important special cases $\Gmm = (-\lap)^{-1}$ and $\Gmm = 1$ correspond to  {the 3D Euler equations} (with  {$B=\nabla \times V$} being the vorticity) and the so-called electron magneto-hydrodynamics (E--MHD) system ($B$ being the magnetic field), which can be obtained from the Hall--MHD system by setting the fluid velocity to be zero (\cite{ADFL,BSZ,Light,Pecseli}). That is, the equation for the magnetic field reads \begin{equation}\label{eq:eMHD} \tag{E--MHD}
\left\{
\begin{aligned}
		& \rd_t B + \nb\times ( ( \nb\times B ) \times B) = 0,  \\
	& \nabla \cdot B = 0. \\ 
\end{aligned}
\right.
\end{equation} Recently, the Hall--MHD, E--MHD systems and their viscous counterparts have received a lot of attention in the PDE community (\cite{CDL,ChaeLee,CS,CWW,CWe,Dai1,Dai2,DT1,DT2}), not only because these equations are important in various physical phenomena, but also they naturally appear in  hydrodynamic limit problems (\cite{ADFL}) and raise new technical difficulties. Already at the level of proving well-posedness, \eqref{eq:eMHD} can be considered as a transport equation for $B$ with advecting velocity $V=\nb\times B$, which is one order more singular than $B$. This is in stark contrast with the three-dimensional incompressible Euler equations (\eqref{eq:EH-active}), where $V= \nabla\times (-\lap)^{-1}B$ is one order more regular than $B$. The 3D Euler equations have solutions which blow up in finite and infinite time (\cite{EJE,EJO,ChenHou,Elgindi-3D,CJ-Hill,CJ-axi}). Therefore, it will be interesting to investigate the dynamics of solutions for the interpolating systems, which was precisely the motivation for studying the generalized SQG equations. 
Somewhat surprisingly, it turns out that if we consider the special case of \eqref{eq:EH-active} with   $B(t,x)=(0,0, \theta (t,x_1, x_2)  ),$ and $ V= \nabla ^\perp \Gmm [\theta]$, then  we obtain precisely (gSQG), see \ref{subsec:2andhalfd} for details. That is, any gSQG equation can be embedded into our system \eqref{eq:EH}, so that \eqref{eq:EH} contains all the phenomena observed in the gSQG case. In the current work, we shall focus on the multipliers $\Gmm$ which lie in between  {those above two cases} and discuss well/ill-posedness of the Cauchy problem for \eqref{eq:EH}. 

\begin{remark}
	The term ``active vector'' was used in \cite{con-EL} to denote transport systems for vector-valued functions associated with the incompressible Euler equations. We generalize the notion of an active vector to allow a stretching term on the right hand side. 
\end{remark}

\subsection{Main result}

Our first main result provides local well-posedness of smooth solutions for \eqref{eq:EH} when the operator $\Gmm$ is up to one order more singular than the case of the 3D Euler equations. The precise assumptions we impose on $\Gmm$ are given as follows. 
\medskip

\noindent \textbf{Assumptions on $\Gmm$}. Let $\Lmb = (-\lap)^{\frac12}$ be the Zygmund operator. We shall assume that $\Gmm$ satisfies    \begin{itemize}
	\item  {$\Gmm \lesssim \max\{\Lmb^{-1}, \Lmb^{-2}\}$}, $\Gmm$ is decreasing with $\Lmb |\Gmm'| \lesssim \Gmm$: to be precise, for any $|\xi|>0$, we have \begin{equation}\label{as1}
		\begin{split}
			\gmm(|\xi|) \le  {C(|\xi|^{-1} + |\xi|^{-2} )}
		\end{split}
	\end{equation}
	and  \begin{equation}\label{as2}
		\begin{split}
			\gmm'(|\xi|)\le 0, \qquad |\xi| |\gmm'(|\xi|)| \le C\gmm(|\xi|)
		\end{split}
	\end{equation} for some $C>0$. 
	\item  {$\Gmm \gtrsim \min\{ \Lmb^{-2}, \Lmb^{-1} \}$}: that is, \begin{equation}\label{as3}
		\begin{split}
						\gmm(|\xi|) \ge c \, {\min\{ |\xi|^{-2}, |\xi|^{-1} \}}
		\end{split}
	\end{equation} for all $|\xi|>0$ with some $c>0$. This assumption ensures that $\nb\times \Gmm [B]$ is well-defined as an $L^2$ function for $B \in \dot{H}^{-1}\cap L^{2}(\bbR^3)$. 
\end{itemize}

\medskip

\noindent \textbf{Examples of $\Gmm$}. The assumptions are clearly satisfied by $\Gmm = \Lmb^{-a}$ (fractional powers of the negative Laplacian), with any $1 \le a \le 2$. It is not difficult to check that as long as $1 < a < 2$, one can take less standard symbols $\Gmm = \Lmb^{-a} \log^{\alp_1}(10 + \Lmb)$, $\Lmb^{-a} \log^{\alp_1}(10 + \Lmb) \log^{\alp_2}(10 + \log (10 + \Lmb)) $, etc, for any $\alp_i$.

\medskip

{We now introduce the function space $\mathcal{Y}^1(\bbR^n)$ defined by
$$\mathcal{Y}^1(\bbR^{n})= \left\{ f\in \mathcal{D} ^\prime(\Bbb R^n) \, |\,  \int_{\Bbb R^n} { (1 + |\xi|)} |\hat{f}(\xi )|\,  \ud\xi =: \| f\|_{\mathcal{Y}^1} <+\infty\right \},$$ which is a slight variant of the $\calX^{1}$ space used in \cite{CCGS,lei-lin}.} We are now ready to state our main result.
\begin{maintheorem}\label{thm:lwp}
	We have the following local well-posedness and blow-up criterion for \eqref{eq:EH}. 
\begin{itemize}
\item[(i)] (Local well-posedness)	For any $s > \frac{5}{2}$, the system \eqref{eq:EH} is locally well-posed in $H^{s}(\Bbb R^3)$: given a divergence-free initial data $B_0 \in H^{s}$, there exist $T>0$ and a unique solution $B \in C([0,T];H^{s}(\bbR^3))$ with $B(t=0)=B_{0}$. 
\item[(ii)](Blow up criterion) Assume that $B$ is a solution to \eqref{eq:EH} belonging to $C([0,T];H^{s}(\bbR^3))$. Then, 
$$\limsup_{ t\to T} \|B\|_{H^s} =+\infty\quad \text{ if and only if }\quad \int_0 ^T  { \|B\|_{\yy} }\, \ud t =+\infty.
$$ Therefore, the solution can be continued past $T$ if and only if $\int_0 ^T  {\|B\|_{\yy} }\, \ud t <\infty$. 
\end{itemize}
\end{maintheorem}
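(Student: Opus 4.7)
The plan is to follow the standard paradigm for quasilinear transport equations with derivative loss. The central step is an a priori estimate
\[
\frac{d}{dt}\|B\|_{H^{s}}^{2}\aleq (1+\|B\|_{\yy})\|B\|_{H^{s}}^{2};
\]
granting this, I construct approximate smooth solutions by a Fourier cutoff or mollification of the nonlinearity (which turns \eqref{eq:EH} into a globally Lipschitz ODE in $L^{2}$), apply the same estimate to the approximants to obtain uniform $H^{s}$ bounds on a common time interval, extract a limit by weak-$*$ compactness, and upgrade from weak to strong convergence in $H^{s}$ by a standard Bona--Smith type continuity argument. Uniqueness follows from an $L^{2}$-level estimate on the difference of two solutions, which is clean because the Fourier bound $|\widehat{\nb V}(\xi)|\aleq |\xi|^{2}\gmm(|\xi|)|\wht{B}(\xi)|\aleq (1+|\xi|)|\wht{B}(\xi)|$ from \eqref{as1} immediately yields $\|V\|_{L^{\infty}}+\|\nb V\|_{L^{\infty}}\aleq \|B\|_{\yy}+\|B\|_{L^{2}}$, so no derivative loss appears at the $L^{2}$ level. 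The blow-up criterion (ii) is then immediate from the a priori estimate by Gr\"onwall, together with the Sobolev-type embedding $H^{s}(\bbR^{3})\hookrightarrow\yy(\bbR^{3})$ for $s>5/2$, which a one-line Cauchy--Schwarz on $\int(1+|\xi|)(1+|\xi|^{2})^{-s/2}\cdot(1+|\xi|^{2})^{s/2}|\wht{f}(\xi)|\,\ud\xi$ confirms.

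To set up the a priori estimate, apply $\Lmb^{s}$ to the active vector form \eqref{eq:EH-active}, pair with $\Lmb^{s}B$, and use $\nb\cdot V=0$ to kill the pure transport contribution. The result is
\[
\tfrac{1}{2}\tfrac{d}{dt}\|B\|_{\dot{H}^{s}}^{2} = -\langle [\Lmb^{s},V\cdot\nb]B,\Lmb^{s}B\rangle + \langle \Lmb^{s}(B\cdot\nb V),\Lmb^{s}B\rangle.
\]
A Kato--Ponce commutator estimate bounds the first term by $(\|\nb V\|_{L^{\infty}}+\|V\|_{H^{s}})(\|B\|_{H^{s}}+\|\nb B\|_{L^{\infty}})\aleq\|B\|_{\yy}\|B\|_{H^{s}}$, which is acceptable. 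The obstruction is the stretching term: under \eqref{as1} the Fourier multiplier for $\nb V$ satisfies $|\xi|^{2}\gmm(|\xi|)=O(|\xi|)$ at high frequencies, so $\nb V$ has the same derivative count as $\nb B$, and any direct product estimate on $\Lmb^{s}(B\cdot\nb V)$ costs an extra $\|B\|_{H^{s+1}}$ on the right-hand side---a full derivative loss.

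Removing this loss is the main obstacle and the principal technical content. Following the scheme used for \eqref{eq:gSQG} in \cite{CCCGW}, the cancellation is exposed by rewriting \eqref{eq:EH} in curl form $\rd_{t}B=\nb\times(V\times B)$ and integrating by parts, which produces
\[
\tfrac{1}{2}\tfrac{d}{dt}\|B\|_{\dot{H}^{s}}^{2} = \langle \Lmb^{s}(V\times B),\Lmb^{s}(\nb\times B)\rangle,
\]
and then Littlewood--Paley decomposing each factor. The low--high and high--low paraproducts give standard commutator-type terms, controlled by $(\|\nb V\|_{L^{\infty}}+\|V\|_{L^{\infty}})\|B\|_{H^{s}}^{2}\aleq\|B\|_{\yy}\|B\|_{H^{s}}^{2}$. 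The dangerous high--high interaction is handled using the exact relation $V=-\nb\times\Gmm[B]$: the order-one symbol $|\xi|^{2}\gmm(|\xi|)$ attached to $\nb V$ is absorbed by the Wiener-type dyadic summability coming from the $\yy$ norm on one of the factors, so that the remaining factor can be paired in $L^{2}$ with $\Lmb^{s}B$. The monotonicity \eqref{as2} and the lower bound \eqref{as3} on $\gmm$ are what make this manipulation quantitative and uniform across the whole admissible range of $\Gmm$. Modulo this one sharp commutator estimate, both (i) and (ii) of the theorem follow from the general scheme described above.
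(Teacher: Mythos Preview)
Your overall framework (a priori estimate $\Rightarrow$ regularized approximants $\Rightarrow$ compactness, $L^{2}$ uniqueness, blow-up via Gr\"onwall and $H^{s}\hookrightarrow\calY^{1}$) matches the paper and is sound. The gap is in the a priori estimate itself, specifically in your identification of which paraproduct piece carries the derivative loss. After writing $\tfrac12\tfrac{d}{dt}\nrm{B}_{\dot H^{s}}^{2}=\langle\Lmb^{s}(V\times B),F\rangle$ with $F=\nb\times\Lmb^{s}B$, you claim the low--high and high--low paraproducts are both ``standard commutator-type terms, controlled by $(\nrm{\nb V}_{L^{\infty}}+\nrm{V}_{L^{\infty}})\nrm{B}_{H^{s}}^{2}$''. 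This is false for the high--low piece ($V$ at high frequency, $B$ low): there $\Lmb^{s}$ lands on $V$, and since $\Lmb^{s}V=-\Gmm F$ up to harmless commutators, the main contribution is exactly $-\langle B\times\Gmm F,F\rangle$, which is the paper's term $I_{1}$. A direct bound would put $\Lmb^{s}V\in L^{2}$ (fine, $\aleq\nrm{B}_{H^{s}}$) against $F\in L^{2}$, which costs $\nrm{B}_{H^{s+1}}$; your stated bound places $V$ in $L^{\infty}$-type norms, but in this paraproduct $V$ is the \emph{high}-frequency factor carrying the $s$ derivatives, so that placement is unavailable. The genuinely high--high piece, which you flag as dangerous, is in fact harmless by routine summation once the high--low term is isolated.

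The missing mechanism is not ``Wiener-type dyadic summability'' absorbing an order-one symbol; it is an algebraic cancellation. Writing $-\langle B\times\Gmm F,F\rangle=\int\epsilon_{ijk}\,\Gmm^{1/2}(F^{i}B^{k})\,\Gmm^{1/2}F^{j}\,\ud x$ and noting $\int\epsilon_{ijk}B^{k}\,\Gmm^{1/2}F^{i}\,\Gmm^{1/2}F^{j}\,\ud x=0$ by antisymmetry of the cross product, one is left with the commutator $\int\epsilon_{ijk}\,[\Gmm^{1/2},B^{k}]F^{i}\,\Gmm^{1/2}F^{j}\,\ud x$, and the sharp estimate $\nrm{\Lmb^{1/2}[\Gmm^{1/2},b]\nb g}_{L^{2}}\aleq\nrm{b}_{\calY^{1}}\nrm{g}_{L^{2}}$ (the paper's Lemma~\ref{lem:comm-1}, the three-dimensional analogue of the \cite{CCCGW} estimate you allude to, and the place where \eqref{as1}--\eqref{as3} are actually used) then closes the bound. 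Your sketch neither locates this term correctly nor names the $\Gmm^{1/2}$-splitting/antisymmetry step that produces the commutator, so as written the argument does not close.
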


\medskip 

 {
When the multiplier $\Gmm$ is more singular, we can prove local well-posedness under the presence of a dissipative term. For concreteness, we focus on the case $\Gmm = \Lmb^{-a}$ with $0 \le a < 1$, whose local well-posedness is not covered by Theorem A in the inviscid case.
\begin{maintheorem}\label{thm:lwp2}
	Consider the fractionally dissipative system \begin{equation}\label{eq:EH-diss}
		\left\{
		\begin{aligned}
			& \rd_t B + \nb\times ( ( \nb\times \Lmb^{-a}[B] ) \times B) = -\Lmb^{b}B,  \\
			& \nabla \cdot B = 0. 
		\end{aligned}
		\right.
	\end{equation} Then, for $0 \le a <1$ and $1-a <b $, the Cauchy problem for \eqref{eq:EH-diss} is locally well-posed in $C([0,T];H^{s}(\bbR^3))$ in the same sense as in Theorem A, as long as $s>\frac{7}{2}-a$. 
\end{maintheorem}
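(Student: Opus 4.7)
\textbf{Plan for Theorem \ref{thm:lwp2}.} The strategy is to establish an a priori $H^{s}$ energy estimate for \eqref{eq:EH-diss} in which the $(1-a)$-order derivative loss produced by $V = -\nb \times \Lmb^{-a} B$ is absorbed by the fractional dissipation $\Lmb^{b}$. Once this estimate is in hand, existence of solutions follows from a standard approximation procedure (for instance parabolic regularization by adding $\veps \lap B$ and sending $\veps \downarrow 0$, or Galerkin truncation), and uniqueness follows from the analogous but simpler estimate on the difference of two solutions in a lower Sobolev norm.

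Writing \eqref{eq:EH-diss} in active vector form $\rd_{t} B + V \cdot \nb B = B \cdot \nb V - \Lmb^{b} B$, applying $\Lmb^{s}$, and pairing with $\Lmb^{s} B$ in $L^{2}$, the dissipation contributes the favorable term $\nrm{\Lmb^{s+b/2} B}_{L^{2}}^{2}$. For the advection, the divergence-free condition cancels $V \cdot \nb \Lmb^{s} B$ and leaves the commutator $\langle [\Lmb^{s}, V] \nb B, \Lmb^{s} B\rangle$; the stretching term $\langle \Lmb^{s}(B \cdot \nb V), \Lmb^{s} B\rangle$ is treated directly by a Leibniz-type product rule. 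In both cases the output splits into a harmless Gronwall piece proportional to $(\nrm{\nb V}_{L^{\infty}} + \nrm{\nb B}_{L^{\infty}}) \nrm{B}_{H^{s}}^{2}$, and a delicate piece of the form
\[
\nrm{B}_{L^{\infty}} \, \nrm{\Lmb^{s+1-a} B}_{L^{2}} \, \nrm{\Lmb^{s} B}_{L^{2}} ,
\]
in which $\nrm{\Lmb^{s+1-a} B}_{L^{2}}$ costs $1-a$ more derivatives than are directly available from $\dot{H}^{s}$.

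To absorb this, use the duality splitting
\[
\bigl| \langle \Lmb^{s} F, \Lmb^{s} B \rangle \bigr| \le \nrm{\Lmb^{s-b/2} F}_{L^{2}} \nrm{\Lmb^{s+b/2} B}_{L^{2}}
\]
for $F \in \{V \cdot \nb B, \, B \cdot \nb V\}$ (or their Kato--Ponce commutator versions), and estimate $\nrm{\Lmb^{s-b/2} F}_{L^{2}}$ by the Kato--Ponce Leibniz rule at level $s - b/2$. The hypothesis $b > 1 - a$ is exactly what ensures $s + 1 - a - b/2 < s + b/2$, so that the intermediate norm $\nrm{\Lmb^{s+1-a-b/2} B}_{L^{2}}$ is controlled by interpolation between $\nrm{\Lmb^{s} B}_{L^{2}}$ and $\nrm{\Lmb^{s+b/2} B}_{L^{2}}$. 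Young's inequality then absorbs the $\Lmb^{s+b/2}$ factor into the dissipation, leaving a Gronwall inequality of the form
\[
\frac{\ud}{\ud t} \nrm{B}_{H^{s}}^{2} + c \, \nrm{\Lmb^{s+b/2} B}_{L^{2}}^{2} \le C \bigl( \nrm{B}_{L^{\infty}} + \nrm{\nb B}_{L^{\infty}} + \nrm{\nb V}_{L^{\infty}} \bigr)^{\bt} \nrm{B}_{H^{s}}^{2}
\]
for some $\bt = \bt(a,b) > 0$. Since $\nb V$ carries $2 - a$ derivatives of $B$, Sobolev embedding yields $\nrm{\nb V}_{L^{\infty}} \aleq \nrm{B}_{H^{s}}$ precisely when $s > \tfrac{3}{2} + (2 - a) = \tfrac{7}{2} - a$, matching the regularity threshold in the statement.

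The main obstacle is this sharp balance between $b$ and $1 - a$: any $b \le 1 - a$ would force $s + 1 - a - b/2 \ge s + b/2$, putting the borderline derivative count above what the dissipation norm $\nrm{\Lmb^{s+b/2} B}_{L^{2}}$ can interpolate, and the linear energy closure would break down. The remaining steps---passing from the uniform a priori bound on approximate solutions to strong $C([0,T]; H^{s})$ existence, uniqueness, and a blow-up criterion read off directly from the Gronwall inequality above---are standard once the a priori estimate is in hand.
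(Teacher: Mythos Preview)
Your derivative count for the stretching term is off by one, and this is fatal for the sharp threshold $b>1-a$. Recall $V=-\nb\times\Lmb^{-a}B$, so $\nb V$ carries $2-a$ derivatives of $B$, not $1-a$. When you apply the Leibniz rule to $\Lmb^{s-b/2}(B\cdot\nb V)$, the top-order piece is $\nrm{B}_{L^\infty}\nrm{\Lmb^{s-b/2}\nb V}_{L^2}\sim\nrm{B}_{L^\infty}\nrm{\Lmb^{s+2-a-b/2}B}_{L^2}$, not $\nrm{\Lmb^{s+1-a-b/2}B}_{L^2}$ as you wrote. Your interpolation condition then becomes $s+2-a-b/2<s+b/2$, i.e.\ $b>2-a$, which is strictly stronger than the hypothesis $b>1-a$. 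In the range $1-a<b\le 2-a$ your closure fails.

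What rescues the sharp result is a structural cancellation you have not used: the worst stretching piece $\langle B\cdot\nb\Lmb^{s}V,\Lmb^{s}B\rangle$ must be combined with the piece $-\langle\Lmb^{s}V\cdot\nb B,\Lmb^{s}B\rangle$ coming from the transport side, rather than treated separately. Their sum rewrites, via the curl identity and one integration by parts, as $-\int F\cdot(\Lmb^{-a}F\times B)\,\ud x$ with $F=\nb\times\Lmb^{s}B$. The antisymmetry of the cross product makes $\int\epsilon_{ijk}B^{k}\Lmb^{-a/2}F^{i}\Lmb^{-a/2}F^{j}\,\ud x=0$, so what remains is governed by the commutator $[\Lmb^{-a/2},B^{k}]$; this gains exactly the missing derivative and yields $|I_{1}|\lesssim\nrm{B}_{\calY^{1}}\nrm{\Lmb^{s+(1-a)/2}B}_{L^{2}}^{2}$. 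Now interpolation requires only $(1-a)/2<b/2$, i.e.\ $b>1-a$, and Young's inequality closes the estimate. Without this symmetrization (which is the analogue of the key commutator lemma for the inviscid case), the duality--Kato--Ponce scheme you outline cannot reach the stated threshold.
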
 }

\medskip

\noindent \textbf{Sharpness of Theorems A and B}. We emphasize that the local well-posedness statements in Theorems \ref{thm:lwp} and \ref{thm:lwp2} are not simple consequences of standard energy estimates, even in the case of $\Gmm = \Lmb^{-a}$. To explain the sharpness of Theorem \ref{thm:lwp}, one can note that already in the case of 3D Euler where $\Gmm=\Lmb^{-2}$,  $\nb V$ in the right hand side of \eqref{eq:EH-active} is already as singular as $B$, so that as soon as $\Gmm=\Lmb^{-a}$ with $a<2$, $\nb V$ is strictly more singular than $B$. Therefore, to close the a priori estimate in Sobolev spaces in the range $a \in [1,2)$, a cancellation structure and sharp commutator estimate must be used, which can be considered as an extension of the one proved in \cite{CCCGW} for local well-posedness of gSQG equations in the ``singular'' regime. 
Furthermore, we believe that the threshold $a=1$ is sharp for local well-posedness; in a recent work \cite{JO1}, \textit{strong ill-posedness} of the Cauchy problem for the electron--MHD system (\eqref{eq:EH} with $\Gmm = 1$) was established in any sufficiently regular Sobolev, H\"older and even Gevrey spaces. From the paper \cite{JO1}, there are many reasons to believe that the loss of derivative in the electron--MHD system is precisely equal to 1, suggesting ill-posedness of \eqref{eq:EH} in the whole range of multipliers $\Gmm = \Lmb^{-a}$ with $0<a<1$. Formally, taking  $\Gmm = \Lmb^{-a}$ and plugging in the ansatz $j \simeq \Lmb^{-1}b^{z}$ (which is reasonable since $b^{z}$ should be comparable with $\nb j$), we have that \begin{equation*}
	\begin{split}
		\rd_t b^z + \nb^\perp\Gmm b^z \cdot \nb b^z - \nb^\perp\Lmb^{-1} b^{z} \cdot \nb \Lmb^{1-a} b^{z} \simeq 0, 
	\end{split}
\end{equation*} which is expected to be ill-posed, due to the singular term $\nb \Lmb^{1-a} b^{z}$. The methods developed in \cite{JO1,JO3,CJO1} should be applicable to give ill-posedness, but the proof will be highly involved since \eqref{eq:EH} is a three-dimensional system involving a non-local operator. Similarly, we expect the restriction $1-a<b$ in Theorem \ref{thm:lwp2} to be sharp; indeed, in the electron--MHD case $a = 0$, while Theorem \ref{thm:lwp2} requires $b>1$, strong ill-posedness was shown whenever $b\le1$ (\cite{JO1}). Furthermore, it can be considered as an extension of the local well-posedness results of \cite{CDL,CWe} in the case $b=2$. 

\medskip

\noindent \textbf{Organization of the paper.} {Theorems A and B will be proved in Section 3 below, after discussing a few basic properties in Section 2. Possible extensions and open problems will be discussed in Section 4.}

\section{Basic properties of the active vector system}

\subsection{Conservation laws}
 
 We can also write  \eqref{eq:EH} in  another velocity formulation.
Let us  consider  $u$, which is the   solution of 
$ \nb\times u = B$   satisfying the ``gauge fixing condition" $ \nb \cdot u = 0$. 
We also assume  $u$ decays  sufficiently fast at infinity. 
Then, since \eqref{eq:EH} is written as
$$ \nabla \times ( \rd_t u + (\nb\times u) \times \lap\Gmm u  )=0, $$
there exists a scalar function $Q=Q(x,t)$ such that 
\be\label{eq:EH-vel}
		\left\{
	\begin{aligned} & \rd_t u + (\nb\times u) \times \lap\Gmm u =-\nb Q ,\\
	 &  \nb \cdot u = 0.
	 	\end{aligned}	\right. 
\ee
We recall that in the case of Euler equations, $Q = p + \frac{|u|^{2}}{2}$, where $p$ is the pressure. 
Comparing \eqref{eq:EH-vel} with \eqref{eq:EH},  we find the following relation between the ``two velocities",
\be
V=\Delta \Gmm [u].
\ee
We present the basic conservation laws for the active vector system. Among others  please  note that the helicity defined below  is independent of the multiplier $\Gmm$ in \eqref{eq:EH}.
\begin{proposition}[Conservation laws]
	For a sufficiently smooth and decaying solutions of \eqref{eq:EH}, we have the following conservations.
	\begin{itemize}
	\item[(i)] {\bf (Energy)}   Let us define the energy:
	$$E(t)= \frac12 \int_{\Bbb R^3} |\Gmm^{\frac12}B |^2 \, \ud x=\frac12\int_{\Bbb R^3} |(-\Delta)^\frac12 \Gmm ^\frac12  u|^2 \, \ud x . $$
	Then, $ E(t)=E(0). $
	\item[(ii)] {\bf (Helicity)}  We define the helicity:
	$$H(t)= \int_{\Bbb R^3} B\cdot  \nabla \times (-\Delta )^{-1} B \, \ud x =  \int_{\Bbb R^3}   B\cdot  u \,\, \ud x. $$
	Then, $ H(t)=H(0)$. 
	\end{itemize}
\end{proposition}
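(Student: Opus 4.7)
The first step is to verify the two expressions given for $E(t)$ and $H(t)$ actually agree. Since $B=\nabla\times u$ with $\nabla\cdot u=0$, on the Fourier side $\hat B=i\xi\times \hat u$ with $\xi\cdot\hat u=0$, so $|\hat B|^2=|\xi|^2|\hat u|^2$, whence Plancherel gives
\[
\int |\Gmm^{1/2}B|^2\,\ud x=\int \gmm(|\xi|)|\xi|^2|\hat u|^2\,\ud\xi=\int|(-\lap)^{1/2}\Gmm^{1/2}u|^2\,\ud x.
\]
For helicity, the same calculation plus the identity $(-\lap)u=\nb\times B$ (valid because $\nb\cdot u=0$) shows $u=\nb\times(-\lap)^{-1}B$, so $\int B\cdot u\,\ud x=\int B\cdot\nb\times(-\lap)^{-1}B\,\ud x$. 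All these identities are legitimate as long as the solution decays sufficiently, which is assumed.

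\textbf{Energy.} The plan is to test \eqref{eq:EH} against $\Gmm[B]$. Since $\Gmm$ is a Fourier multiplier, $\frac{\ud}{\ud t}E=\int \Gmm[B]\cdot\rd_t B\,\ud x$. Substituting the equation and integrating by parts (moving the curl onto $\Gmm[B]$, which is legitimate under decay),
\[
\frac{\ud}{\ud t}E=-\int \Gmm[B]\cdot\nb\times\bigl((\nb\times\Gmm[B])\times B\bigr)\,\ud x=-\int (\nb\times\Gmm[B])\cdot\bigl((\nb\times\Gmm[B])\times B\bigr)\,\ud x.
\]
Setting $W:=\nb\times\Gmm[B]$, the integrand is $W\cdot(W\times B)$, which vanishes pointwise by the scalar triple product identity. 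Hence $\frac{\ud}{\ud t}E=0$.

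\textbf{Helicity.} I will split
\[
\frac{\ud}{\ud t}H=\int \rd_t B\cdot u\,\ud x+\int B\cdot\rd_t u\,\ud x
\]
and handle each term using a different formulation of the equation. For the first, substitute \eqref{eq:EH} and integrate by parts to move the curl onto $u$, using $\nb\times u=B$:
\[
\int\rd_t B\cdot u\,\ud x=-\int \bigl((\nb\times\Gmm[B])\times B\bigr)\cdot(\nb\times u)\,\ud x=-\int\bigl((\nb\times\Gmm[B])\times B\bigr)\cdot B\,\ud x=0,
\]
again by the triple product identity. For the second, substitute \eqref{eq:EH-vel}:
\[
\int B\cdot\rd_t u\,\ud x=-\int (\nb\times u)\cdot\bigl((\nb\times u)\times\lap\Gmm u\bigr)\,\ud x-\int B\cdot\nb Q\,\ud x.
\]
The first integrand vanishes pointwise (same identity with $W'=\nb\times u$), and the pressure term $\int B\cdot\nb Q\,\ud x=-\int Q\,\nb\cdot B\,\ud x=0$ because $B$ is divergence-free. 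Hence $\frac{\ud}{\ud t}H=0$.

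\textbf{Main obstacle.} There is no genuine analytic obstruction: the entire argument reduces to two pointwise vector identities together with integration by parts justified by the assumed decay. The only thing worth being careful about is the symmetry in the integration by parts for the energy (ensuring that the Fourier multiplier $\Gmm^{1/2}$ is self-adjoint so that testing against $\Gmm[B]$ is equivalent to integrating $|\Gmm^{1/2}B|^2$), and the fact that the pressure $Q$ in \eqref{eq:EH-vel} decays enough for the boundary term to disappear when one transfers the gradient onto $B$.
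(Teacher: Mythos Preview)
Your proof is correct and follows essentially the same approach as the paper: testing \eqref{eq:EH} against $\Gmm[B]$ for the energy, and splitting $\frac{\ud}{\ud t}H$ into the $\rd_t B\cdot u$ and $B\cdot\rd_t u$ pieces, handling them via \eqref{eq:EH} and \eqref{eq:EH-vel} respectively, with each term vanishing by the scalar triple product or the divergence-free condition. The only addition is your preliminary verification that the two displayed expressions for $E$ and for $H$ coincide, which the paper states but does not spell out.
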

\begin{proof}
\begin{itemize}
\item[(i)]	Taking the $L^2 (\Bbb R^3)$ inner product of \eqref{eq:EH} with $\Gmm B$ and integrating by part, we have that \begin{equation*}
		\begin{split}
		\frac12 \frac{\ud E(t)}{\ud t}  &= - \int_{\Bbb R^3}  \nb\times ( (\nb\times\Gmm B) \times B) \cdot \Gmm B \, \ud x \\&  = - \int_{\Bbb R^3} ( (\nb\times\Gmm B) \times B) \cdot( \nb\times  \Gmm B) \, \ud x = 0. 
		\end{split}
	\end{equation*} 
\item[(ii)] 	 Using \eqref{eq:EH} and \eqref{eq:EH-vel}, and integrating by part we compute
\begin{align*}
\frac{\ud H(t)}{\ud t} &= \int_{\Bbb R^3} B_t\cdot u \, \, \ud x +\int_{\Bbb R^3} B\cdot u_t \, \, \ud x \\
&= -\int_{\Bbb R^3} \nabla \times \left(  (\nabla \times \Gmm[B] )  \times B\right)\cdot u \, \, \ud x - \int_{\Bbb R^3} B\cdot \left( (\nabla \times u) \times \Delta \Gmm[u] + \nabla Q\right) \, \, \ud x\\
&=  -\int_{\Bbb R^3}  \ (\nabla \times \Gmm[B] )  \times B\cdot  B \, \, \ud x - \int_{\Bbb R^3} B\cdot B\times \Delta \Gmm[u] \, \, \ud x 
-\int_{\Bbb R^3} B\cdot \nabla Q \, \ud x\\
&=0.
\end{align*}
\end{itemize} 
\noindent This finishes the proof. 
\end{proof}

\medskip

We  shall observe that the formulation \eqref{eq:EH-active}, viewing $V$ as a velocity field, is useful in  the Lagrangian formulation of \eqref{eq:EH}.
Let us introduce the particle trajectory mapping $X(\a, t)$ generated by  $V$ defined by
$$\frac{\partial X (t,\a)}{ \partial t } = V(t,X(t,\a)) \quad ; \quad X(0,\a)=\a.
$$
As an immediate consequence of \eqref{eq:EH-active} the following  Cauchy formula 
\be\label{cauchy}
B( t,X(t,\a)) = B_0 (\a) \cdot \nabla X(t,\a)
\ee
holds.  For  details of the  proof see e.g. \cite[Lemma 1.4]{MB}, which is obviously applicable to our case.  Let the initial closed curve $\mathcal{C}_0$ be  an integral curve of $B_0$, 
namely there exists $ \lambda (\cdot ) : [0, 1) \to \Bbb R$ such that 
\be\label{c0}
\mathcal{C}_0=\{ \eta (s)\in \Bbb R^3  \, |\, s\in [0, 1], \eta  (0)=\eta (s) , \, \eta^\prime (s) = \lambda (s) B_0 (\eta(s) )\}.
\ee
and define 
\be\label{c1}
 \mathcal{C}_t= X( t,\mathcal{C}_0 )= \{ X(t, \eta (s)) \in \Bbb R^3  \, |\, s\in [0, 1], \eta  (0)=\eta (s) , \, \eta^\prime (s) = \lambda (s) B_0 (\eta(s) )\}.
\ee
Then, we claim $C_t$ is an integral curve of $B(\cdot, t)$, namely 
\be
 \frac {\partial X(t, \eta (s)) }{\partial s} = \lambda (s) B (t, \eta(s) )      \quad \forall s\in[0,1], \quad t>0.
\ee
Indeed,  using  \eqref{cauchy},  we have
\begin{align*} 
\frac {\partial X(t, \eta (s)) }{\partial s}&= \frac{d\eta(s) }{ds} \cdot  \nabla X(t, \a)=\lambda (s) B_0 ( \eta(s) ) \cdot  \nabla X(t,\a)\\
&= \lambda (s) B (t , \eta(s) ) .
\end{align*}  

\subsection{ {$2+\frac12$-dimensional case}} \label{subsec:2andhalfd}

 {
The system \eqref{eq:EH} formally satisfies translational invariance, and therefore we may consider special solutions which are not dependent on the third coordinate. Such a reduction is sometimes referred to as the $2+\frac{1}{2}$--dimensional reduction (\cite{MB}). Both 3D Euler and Hall-MHD systems have been extensively studied under this simplifying ansatz (\cite{BT,DiPM,JY,JO1,Du,Ya-PhysD,CWo1}). From the $z$-independence and divergence-free condition, we can write \begin{equation*}
	\begin{split}
		B = (-\rd_y j, \rd_x j, b^z).
	\end{split}
\end{equation*} For simplicity we set $G = \nb\times\Gmm B$ and write $G = (g^x,g^y,g^z)$. From \begin{equation*}
	\begin{split}
		\rd_tB + B\cdot\nb G - G\cdot\nb B = 0, 
	\end{split}
\end{equation*} taking the $z$-component gives \begin{equation*}
	\begin{split}
		\rd_t b^z - G \cdot\nb b^z + B\cdot\nb g^z = 0. 
	\end{split}
\end{equation*} We now note that \begin{equation*}
	\begin{split}
		B_h = \nb^\perp j,\qquad G_h = -\nb^\perp \Gmm b^z,\qquad g^z = \Gmm\lap j. 
	\end{split}
\end{equation*} ($V_h$ stands for the vector consisting of the first two components of $V$. Moreover, $\nb^\perp = (-\rd_y, \rd_x)$.) Then, the equation for $b^z$ can be written as \begin{equation*}
	\begin{split}
		\rd_t b^z + \nb^\perp\Gmm b^z \cdot \nb b^z + \nb^\perp j \cdot \nb \Gmm\lap j  = 0 . 
	\end{split}
\end{equation*}
Next, from  \begin{equation*}
	\begin{split}
		\rd_t b^x - G \cdot\nb b^x + B\cdot\nb g^x = 0,  
	\end{split}
\end{equation*}  noting that \begin{equation*}
	\begin{split}
		- G \cdot\nb b^x + B\cdot\nb g^x &= -g^x\rd_xb^x - g^y\rd_yb^x + b^x\rd_x g^x + b^y\rd_y g^x  \\
		& = \rd_y(g^xb^y) - \rd_y(g^yb^x)
	\end{split}
\end{equation*} (we have used the divergence-free conditions $\rd_xb^x+\rd_yb^y = 0$, $\rd_xg^x+\rd_yg^y=0$), we have that \begin{equation*}
	\begin{split}
		\rd_t j + \nb^\perp\Gmm b^z\cdot\nb j = 0. 
	\end{split}
\end{equation*} We have arrived at the closed system for $b^z$ and $j$: \begin{equation}\label{eq:EH-reduced2}
	\left\{
	\begin{aligned}
		&\rd_t b^z + \nb^\perp\Gmm b^z \cdot \nb b^z + \nb^\perp j \cdot \nb \Gmm\lap j  = 0 , \\
		&	\rd_t j + \nb^\perp\Gmm b^z\cdot\nb j = 0. 
	\end{aligned}
	\right.
\end{equation} In particular, note that when $j_{0}=0$,  then the second equation of \eqref{eq:EH-reduced2} implies that $j(\cdot, t)=0$ for $t>0$
as long as the smooth solution persists. Then, we have simply 
\begin{equation*}
	\begin{split}
		\rd_t b^z + \nb^\perp\Gmm b^z \cdot \nb b^z  = 0,
	\end{split}
\end{equation*} which is (gSQG).
In this sense the (gSQG) systems are embedded into \eqref{eq:EH}.  In the case of $\Gmm=\lap^{-1}$, this is nothing but the 2D Euler equations.
}

\section{ {Proof of  the main theorem}}\label{sec:proof}

We first prove the the following key commutator estimate.

\begin{lemma}\label{lem:comm-1} Let $\Lmb=(-\lap)^{\frac12}$ and $\Gmm$ satisfy the assumptions \eqref{as1}--\eqref{as3}. For $g\in L^2 (\Bbb R^n)$ and ${b}\in \calY^1$, we have \begin{equation}\label{eq:comm-1}
		\begin{split}
			\nrm{\mathbf{P}_{\ge 1} \Lmb^{\frac12} [\Gmm^{\frac12},b]\nb g}_{L^{2}} \le C\nrm{b}_{\calY^1} \nrm{g}_{L^{2}}
		\end{split}
	\end{equation} and \begin{equation}\label{eq:comm-2}
	\begin{split}
			\nrm{\Lmb^{\frac12}\mathbf{P}_{<1} \Lmb^{\frac12} [\Gmm^{\frac12},b]\nb g}_{L^{2}} \le C\nrm{b}_{\calY^1} \nrm{g}_{L^{2}}.
	\end{split}
\end{equation} Here, $\mathbf{P}_{\ge 1}$ is defined with Fourier multiplier $\mathbf{1}_{|\xi|\ge1}$ and $\mathbf{P}_{<1}=1-\mathbf{P}_{\ge 1}$.
\end{lemma}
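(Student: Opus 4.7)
The plan is to work entirely on the Fourier side. Setting $\Phi(\xi) := \gmm^{\frac12}(|\xi|)$ and $F := [\Gmm^{\frac12}, b]\nb g$, a direct computation gives
\begin{equation*}
\hat F(\xi) = \int \left(\Phi(\xi) - \Phi(\xi-\eta)\right) i (\xi-\eta) \hat{b}(\eta) \hat{g}(\xi-\eta) \, \ud\eta .
\end{equation*}
The Fourier multipliers on the left-hand sides of \eqref{eq:comm-1} and \eqref{eq:comm-2} are $M_1(\xi) = |\xi|^{\frac12} \mathbf{1}_{|\xi|\ge 1}$ and $M_2(\xi) = |\xi|\, \mathbf{1}_{|\xi| < 1}$ respectively, so by Plancherel both estimates reduce to an $L^2_\xi$ bound on $M_j(\xi) \hat F(\xi)$. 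My aim is the pointwise kernel bound
\begin{equation*}
M_j(\xi)\, |\Phi(\xi) - \Phi(\xi-\eta)|\, |\xi-\eta| \le C (1 + |\eta|), \qquad j \in \{1,2\},
\end{equation*}
uniformly in $\xi$ and $\eta$; once this is in place, Young's convolution inequality immediately gives $\nrm{M_j \hat F}_{L^2_\xi} \aleq \nrm{(1+|\cdot|)\hat b}_{L^1} \nrm{\hat g}_{L^2} = \nrm{b}_{\calY^1}\nrm{g}_{L^2}$, which, by Plancherel, is the desired estimate.

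To establish the kernel bound I split according to whether $|\eta| \le \tfrac12 |\xi|$ (\emph{diagonal region}) or $|\eta| > \tfrac12 |\xi|$ (\emph{off-diagonal region}). In the diagonal case one has $|\xi-\eta| \sim |\xi|$, and the mean value theorem along the radial direction, combined with the pointwise estimate $|\Phi'(s)| = |\gmm'(s)|/(2\gmm^{\frac12}(s)) \aleq \Phi(s)/s$ (a direct consequence of \eqref{as2}) and the slow variation $\Phi(s) \sim \Phi(|\xi|)$ for $s \sim |\xi|$, yields $|\Phi(\xi) - \Phi(\xi-\eta)| \aleq |\eta|\,\Phi(\xi)/|\xi|$. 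Multiplying by $|\xi-\eta| \sim |\xi|$ reduces the bound to $M_j(\xi)\Phi(\xi) \le C$, which is precisely \eqref{as1} in each of the two frequency regimes. In the off-diagonal region I use the triangle inequality $|\Phi(\xi)-\Phi(\xi-\eta)| \le \Phi(\xi) + \Phi(\xi-\eta)$; since now $|\xi-\eta| \le 3|\eta|$, the $\Phi(\xi)$ piece is again bounded by $3|\eta|\, M_j(\xi) \Phi(\xi) \aleq |\eta|$, while for the $\Phi(\xi-\eta)$ piece \eqref{as1} gives $s\Phi(s) \le C(1 + s^{\frac12})$ and hence $\Phi(\xi-\eta)|\xi-\eta| \aleq 1 + |\eta|^{\frac12}$; combining with the elementary estimate $M_j(\xi) \aleq 1 + |\eta|^{\frac12}$ in this region (from $|\xi| \le 2|\eta|$ when $j=1$ and $|\xi|<1$ when $j=2$) produces a factor of order $1 + |\eta|$.

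The main obstacle is the diagonal piece at high frequency $|\xi| \ge 1$: without exploiting the cancellation between $\Phi(\xi)$ and $\Phi(\xi - \eta)$ via the mean value estimate above, the $\Phi(\xi)$ piece alone combined with $|\xi-\eta| \sim |\xi|$ and $M_1(\xi) = |\xi|^{\frac12}$ would produce a kernel of order $|\xi|$, which is fatal at high frequency. This is precisely where the quasi-logarithmic bound $\Lmb|\Gmm'| \aleq \Gmm$ from \eqref{as2} becomes essential --- mirroring the commutator gain used in the singular-regime analysis of (gSQG) in \cite{CCCGW}. The splitting into the two multipliers $M_1$ and $M_2$ in the statement reflects the two allowed decay rates in \eqref{as1}, $\Lmb^{-1}$ at high frequencies versus $\Lmb^{-2}$ at low frequencies, which must be handled separately but feed into the same pointwise kernel estimate.
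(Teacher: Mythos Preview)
Your argument is correct, but the route you take differs from the paper's. The paper does not split into diagonal and off-diagonal regions; instead it obtains a single global pointwise bound on the symbol difference via the algebraic identity
\[
\gmm^{\frac12}(|\xi|) - \gmm^{\frac12}(|\eta|) = \frac{\gmm(|\xi|) - \gmm(|\eta|)}{\gmm^{\frac12}(|\xi|)+\gmm^{\frac12}(|\eta|)},
\]
followed by writing the numerator as $\int_{|\xi|}^{|\eta|} -\gmm'(\rho)\,\ud\rho$, invoking \eqref{as2} to replace $-\gmm'(\rho)$ by $C\rho^{-1}\gmm(\rho)$, and then bounding $\gmm(\rho)/(\gmm^{\frac12}(|\xi|)+\gmm^{\frac12}(|\eta|)) \le \gmm^{\frac12}(\rho)$ by monotonicity. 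This yields directly
\[
\left|\gmm^{\frac12}(|\xi|)-\gmm^{\frac12}(|\eta|)\right| \le \frac{C|\xi-\eta|}{|\eta|}\bigl(|\xi|^{-\frac12}+|\xi|^{-1}\bigr),
\]
from which the factor $|\eta|$ in the convolution cancels the $|\eta|^{-1}$ here, and the remaining $(1+|\xi|^{-\frac12})$ is absorbed by $\mathbf{P}_{\ge1}$ or by the extra $\Lmb^{\frac12}$ on $\mathbf{P}_{<1}$. Your approach --- mean value on $\Phi=\gmm^{\frac12}$ in the diagonal region (using the slow-variation consequence of \eqref{as2}) and the crude triangle bound off-diagonal --- is the standard paraproduct-style decomposition and is perhaps more modular, while the paper's argument is a bit slicker in that it handles all frequencies at once with a single chain of inequalities. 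Both draw on exactly the same hypotheses at the same places, so neither buys additional generality.
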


\begin{proof}
	Taking the Fourier transform of $f:=\Lmb^{\frac12} [\Gmm^{\frac12},b]\nb g$, we have \begin{equation*}
		\begin{split}
		\widehat{f}(\xi)=	|\xi|^{\frac12} \int_{ \Bbb R^n}  \left( \gmm^{\frac12}(|\xi|) - \gmm^{\frac12}(|\eta|) \right) \widehat{b}(\xi-\eta)   i\eta\widehat{g}(\eta) \, \ud\eta. 
		\end{split}
	\end{equation*}  We bound, using the assumptions for $\gmm$, \begin{equation*}
	\begin{split}
		|\gmm^{\frac12}(|\xi|) - \gmm^{\frac12}(|\eta|)| &=\frac{|\gmm(|\xi|) - \gmm(|\eta|)|}{\gmm^{\frac12}(|\xi|) + \gmm^{\frac12}(|\eta|)}  {=}  \frac{1}{ \gmm^{\frac12}(|\xi|)+ \gmm^{\frac12}(|\eta|) } \int_{|\xi|}^{|\eta|} -\gmm'( {\rho})\,\ud 
		 {\rho} \\
		&\le    \int_{|\xi|}^{|\eta|} C\rho^{-1} \frac{\gmm(\rho)}{{ \gmm^{\frac12}(|\xi|)+ \gmm^{\frac12}(|\eta|) }} \,\ud 
		{\rho}   \le  C \int_{|\xi|}^{|\eta|} {\rho}^{-1}\gmm^{\frac12}( {\rho})\,\ud  {\rho}\\
		&\le C|\xi-\eta| \left( \frac{1}{(|\xi||\eta|)^{\frac12} (|\xi|^{\frac12} + |\eta|^{\frac12}) } + \frac{1}{|\xi||\eta|} \right) \\
			& \le \frac{C|\xi-\eta|}{|\eta|}\left( |\xi|^{-\frac12} + |\xi|^{-1} \right). 
	\end{split}
\end{equation*}  Here, we  have assumed $|\xi|\le|\eta|$, {and used both of \eqref{as2} and \eqref{as1}}, but the same inequality can be trivially deduced when $|\xi|>|\eta|$ as well. Inserting this bound into the integral expression for $\widehat{f}(\xi)$, we have that \begin{equation*}
\begin{split}
	\left| \widehat{f}(\xi) \right| \le C  {(1+|\xi|^{-\frac12})} \int_{ \Bbb R^n} |\xi-\eta||\widehat{b}(\xi-\eta)|   |\widehat{g}(\eta)| \, \ud\eta. 
\end{split}
\end{equation*} 
By Young's convolution inequality, we obtain   {
$$
	\nrm{\mathbf{P}_{\ge 1} f}_{L^2}= C\nrm{ \mathbf{1}_{|\xi|\ge 1} \widehat{f}}_{L^{2}} \le C \|b\|_{\calY^1} \|g\|_{L^{2}}
$$	 and \begin{equation*}
	\begin{split}
		\nrm{\Lmb^{\frac12}\mathbf{P}_{< 1} f}_{L^2}= C\nrm{ |\xi|^{\frac12}\mathbf{1}_{|\xi|<1} \widehat{f}}_{L^{2}} \le C \|b\|_{\calY^1} \|g\|_{L^{2}}. 
	\end{split}
\end{equation*} } This finishes the proof. 
\end{proof}

 { When the operator $\Gmm$ is given precisely by $\Lmb^{-a}$, we can obtain the following sharp inequality, with a straightforward modification of the above proof.
\begin{corollary} \label{lem:comm-3} For $g\in L^2 (\Bbb R^n)$ and ${b}\in \dot{\calY}^{1}(\bbR^{n})$, we have \begin{equation}\label{eq:comm-3}
			\begin{split}
				\nrm{\Lmb^{\frac{a}{2}} [\Lmb^{-\frac{a}{2}},b]\nb g}_{L^{2}} \le C\nrm{b}_{ \dot{\calY}^{1} } \nrm{g}_{L^{2}}, \quad \nrm{b}_{\dot{\calY}^{1}} := \int_{\bbR^n} |\xi||\widehat{b}(\xi)| \,\ud\xi. 
			\end{split}
		\end{equation}
\end{corollary}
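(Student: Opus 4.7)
The plan is to follow the Fourier-side computation of Lemma~\ref{lem:comm-1} essentially verbatim, but to exploit the exact homogeneity of the pure-power symbol $\Lmb^{-a}$ to eliminate both the projections $\mathbf{P}_{\ge1},\mathbf{P}_{<1}$ and the residual $|\xi|^{-1/2}$, $|\xi|^{-1}$ losses that appeared there from the non-homogeneous assumptions \eqref{as1}--\eqref{as3}. As a consequence, the right-hand side will involve only the homogeneous seminorm $\|b\|_{\dot{\calY}^1}$ rather than the inhomogeneous $\calY^1$ norm.

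First I would compute, with $f := \Lmb^{a/2}[\Lmb^{-a/2},b]\nb g$,
\[
\widehat{f}(\xi) = |\xi|^{a/2}\int_{\bbR^n}\Bigl(|\xi|^{-a/2} - |\eta|^{-a/2}\Bigr)\widehat{b}(\xi-\eta)\,i\eta\,\widehat{g}(\eta)\,\ud\eta,
\]
and reduce everything to the pointwise kernel bound
\[
|\xi|^{a/2}\bigl||\xi|^{-a/2} - |\eta|^{-a/2}\bigr| \le C\,\frac{|\xi-\eta|}{|\eta|}.
\]
Setting $r := |\xi|/|\eta|$, the left-hand side equals $|1 - r^{a/2}|$ while $|\xi-\eta|/|\eta| \ge |r - 1|$, so it suffices to verify the scalar inequality $|1 - r^{a/2}| \le C_a\,|r-1|$ for $r > 0$. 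This is elementary and holds for every $0 \le a \le 2$: the quotient extends continuously by $a/2$ at $r = 1$, tends to $1$ as $r \to 0$, and behaves like $r^{a/2 - 1}$ as $r \to \infty$, which is bounded precisely when $a \le 2$.

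Plugging in this kernel estimate and absorbing the $|\eta|^{-1}$ into the $|\eta|$ factor coming from $\nb g$, I would then bound
\[
|\widehat{f}(\xi)| \le C\int_{\bbR^n} |\xi-\eta|\,|\widehat{b}(\xi-\eta)|\,|\widehat{g}(\eta)|\,\ud\eta,
\]
and conclude by Young's convolution inequality with exponents $(1,2;2)$:
\[
\|f\|_{L^2} = C\|\widehat{f}\|_{L^2} \le C\bigl\|\,|\cdot|\,\widehat{b}\,\bigr\|_{L^1}\|\widehat{g}\|_{L^2} = C\|b\|_{\dot{\calY}^1}\|g\|_{L^2}.
\]
The only step with real content is the pointwise kernel inequality; what could have been the main obstacle---carefully tracking the low-frequency $|\xi|^{-1/2}$ and $|\xi|^{-1}$ contributions as in Lemma~\ref{lem:comm-1}---is bypassed entirely by the scaling invariance of $\Lmb^{-a}$, which is exactly what makes the homogeneous seminorm $\dot{\calY}^1$ suffice on the right-hand side.
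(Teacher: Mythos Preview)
Your proposal is correct and follows essentially the same route the paper has in mind: the Fourier-side setup, the reduction to a pointwise kernel bound, and the closing via Young's convolution inequality are exactly the template of Lemma~\ref{lem:comm-1}, and the paper merely states that the corollary is obtained ``with a straightforward modification of the above proof.'' Your scaling substitution $r = |\xi|/|\eta|$ to verify $|\xi|^{a/2}\bigl||\xi|^{-a/2}-|\eta|^{-a/2}\bigr| \le C\,|\xi-\eta|/|\eta|$ is the cleanest way to carry out that modification for the homogeneous symbol, and your observation that it is precisely this homogeneity which removes the need for the projections $\mathbf{P}_{\ge1},\mathbf{P}_{<1}$ and yields the homogeneous $\dot{\calY}^1$ norm is exactly the point.
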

}

\begin{lemma}\label{lem:comm-2}
The following embedding relations hold.
\be\label{emb}
 H^s (\Bbb R^n)\hookrightarrow  \calY^1 (\bbR^n) \hookrightarrow     \dot{W}^{1, \infty} (\Bbb R^n)
\ee
 for $s>\frac n2 +1.$  
 \end{lemma}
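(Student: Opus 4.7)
The plan is to establish the two embeddings separately; both are direct applications of basic Fourier analysis and neither presents any real obstacle, so the main point is simply to verify the integrability thresholds.

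For the first embedding $H^s \hookrightarrow \calY^1$ when $s > \frac{n}{2}+1$, I would apply Cauchy--Schwarz to the defining integral. Writing
\begin{equation*}
\int_{\bbR^n} (1+|\xi|)|\wht{f}(\xi)|\,\ud\xi = \int_{\bbR^n} \frac{1+|\xi|}{(1+|\xi|^{2})^{s/2}} \cdot (1+|\xi|^{2})^{s/2}|\wht{f}(\xi)|\,\ud\xi,
\end{equation*}
Cauchy--Schwarz gives the bound
\begin{equation*}
\nrm{f}_{\calY^{1}} \le \left( \int_{\bbR^n} \frac{(1+|\xi|)^{2}}{(1+|\xi|^{2})^{s}}\,\ud\xi \right)^{\!1/2} \nrm{f}_{H^{s}}.
\end{equation*}
Passing to polar coordinates, the first integral is finite precisely when $2s - 2 > n$, i.e. $s > \frac{n}{2}+1$, which gives $\nrm{f}_{\calY^{1}} \le C_{s,n}\nrm{f}_{H^{s}}$.

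For the second embedding $\calY^{1} \hookrightarrow \dot{W}^{1,\infty}$, I would use Fourier inversion. Since $f \in \calY^{1}$ has $\wht{f} \in L^{1}$ with $|\xi|\wht{f} \in L^{1}$, the derivatives $\rd_j f$ are well-defined continuous functions given by
\begin{equation*}
\rd_j f(x) = c_n \int_{\bbR^n} e^{ix\cdot\xi}\, i\xi_{j}\, \wht{f}(\xi)\,\ud\xi,
\end{equation*}
so taking absolute values inside the integral yields $|\rd_j f(x)| \le c_n \int |\xi||\wht{f}(\xi)|\,\ud\xi \le C\nrm{f}_{\calY^{1}}$ uniformly in $x$ and $j$, which is the desired $\dot{W}^{1,\infty}$ bound.

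There is no real ``hard part'' here; the only thing to be careful about is that the first embedding uses the $(1+|\xi|)$ weight in $\calY^1$ (not just $|\xi|$), which matches the $(1+|\xi|^2)^{s/2}$ weight of $H^s$, and that the critical Sobolev index $s > \frac{n}{2}+1$ comes precisely from making $(1+|\xi|)^{2-2s}$ integrable in $n$ dimensions.
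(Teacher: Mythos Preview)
Your proof is correct and follows essentially the same approach as the paper: Cauchy--Schwarz with the weight $(1+|\xi|^2)^{s/2}$ for the first embedding (the paper writes $(1+|\xi|)|\wht f|\le (1+|\xi|^2)^{s/2}|\wht f|\cdot(1+|\xi|^2)^{-(s-1)/2}$, which is the same splitting up to the harmless equivalence $(1+|\xi|)\simeq(1+|\xi|^2)^{1/2}$), and the trivial Fourier inversion bound $|\nb f(x)|\le\int|\xi||\wht f|\,\ud\xi\le\nrm{f}_{\calY^1}$ for the second.
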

 \begin{proof}
 For all $x\in \Bbb R^n$ the following estimates hold.
\begin{equation*}
\begin{split}	
| \nabla f (x)| &\le \int_{\Bbb R^n} |\xi| |\hat{b}(\xi)|\ud\xi  {\le}	\|f\|_{ \calY^1}  \le   \int_{ \Bbb R^n} ( 1+|\xi|^2) ^{\frac{s}{2}} |\widehat{f}(\xi)|  (	1+|\xi|^2 ) ^{-\frac{ s-1}{2}} \ud\xi \\
	&\le \left(  \int_{ \Bbb R^n} ( 1+|\xi|^2) ^{s} |\widehat{f}(\xi)|^2 \ud\xi \right)^{\frac12} \left(  \int_{ \Bbb R^n} (	1+|\xi|^2 ) ^{-s+1} \ud\xi	\right)^{\frac12} \le C \nrm{f}_{H^{s}} ,
\end{split}
\end{equation*} 
where we used the fact $ \int_{ \Bbb R^n} (	1+|\xi|^2 ) ^{-s+1} \, \ud\xi<+\infty $ for $s>\frac n2 +1$.
Hence, we have shown $ \| \nabla f\|_{L^\infty} \le \|f\|_{\calY^{1}}  \le C \|f\|_{H^s} $, and the lemma is proved.
\end{proof} 

Lastly, we shall need a simple commutator estimate. 
\begin{lemma}\label{lem:comm-4} For two functions $f, g$ belonging to $H^s$, we have 
	\begin{equation*}
		\begin{split}
			\nrm{ [\Lmb^{s}, f\cdot\nb] g }_{L^{2}} \le C(\nrm{f}_{\calY^{1}} \nrm{g}_{H^{s}} + \nrm{g}_{\calY^{1}} \nrm{f}_{H^{s}} ). 
		\end{split}
	\end{equation*}
\end{lemma}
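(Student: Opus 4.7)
The plan is to reduce this estimate to a classical Kato--Ponce type commutator bound by first removing $\nb$ from inside the commutator, and then converting the resulting $\dot{W}^{1,\infty}$ norms into $\calY^{1}$ norms via the embedding of Lemma \ref{lem:comm-2}.

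First, since $\Lmb^{s}$ commutes with every partial derivative $\rd_i$, one can rewrite
$$[\Lmb^{s}, f\cdot\nb]g \;=\; \sum_{i} [\Lmb^{s}, f^{i}]\, \rd_i g,$$
so it suffices to estimate each scalar commutator $[\Lmb^{s}, f^{i}]\,\rd_i g$ in $L^{2}$. This simple identity is the only structural observation needed, and it avoids having to commute an unbounded vector field past $\Lmb^{s}$.

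Next, I would invoke the standard Kato--Ponce commutator estimate
$$\|[\Lmb^{s}, a]\, h\|_{L^{2}} \;\le\; C\bigl( \|\nb a\|_{L^{\infty}}\|\Lmb^{s-1} h\|_{L^{2}} + \|\Lmb^{s} a\|_{L^{2}}\|h\|_{L^{\infty}} \bigr),$$
valid for $s>0$, applied with $a=f^{i}$ and $h=\rd_i g$. Using $\|\Lmb^{s-1}\rd_i g\|_{L^{2}} \le \|g\|_{H^{s}}$ and $\|\Lmb^{s} f^{i}\|_{L^{2}} \le \|f\|_{H^{s}}$, this yields
$$\|[\Lmb^{s}, f\cdot\nb]g\|_{L^{2}} \;\aleq\; \|\nb f\|_{L^{\infty}}\|g\|_{H^{s}} + \|f\|_{H^{s}}\|\nb g\|_{L^{\infty}}.$$
Finally, Lemma \ref{lem:comm-2} provides $\|\nb f\|_{L^{\infty}} \le \|f\|_{\calY^{1}}$ and $\|\nb g\|_{L^{\infty}} \le \|g\|_{\calY^{1}}$, which gives exactly the claimed bound.

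There is essentially no substantive obstacle; the only real choice is whether to quote the Kato--Ponce inequality as a black box or to derive it in place. If a self-contained Fourier-analytic proof is preferred, in the spirit of Lemma \ref{lem:comm-1}, one decomposes $[\Lmb^{s}, f^{i}]\rd_i g$ via Littlewood--Paley into low--high, high--low, and high--high paraproduct pieces. The only delicate piece is the high--high interaction, where one exploits the telescoping identity
$$|\xi|^{s} - |\eta|^{s} \;=\; s\int_{0}^{1} |\eta+t(\xi-\eta)|^{s-2}\bigl(\eta+t(\xi-\eta)\bigr)\cdot(\xi-\eta)\,\ud t$$
to extract one derivative on the difference $\xi-\eta$ and pair $\|f\|_{H^{s}}$ against $\|\nb g\|_{L^{\infty}}$, after which Young's convolution inequality closes the estimate.
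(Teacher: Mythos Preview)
Your proposal is correct, but it takes a different route from the paper's own proof. You reduce to the classical Kato--Ponce commutator estimate and then pass from $\dot{W}^{1,\infty}$ to $\calY^{1}$ via Lemma~\ref{lem:comm-2}. The paper instead works directly on the Fourier side: writing $\widehat{H}(\xi)=\int (|\xi|^{s}-|\eta|^{s})\,\widehat{f}(\xi-\eta)\,i\eta\,\widehat{g}(\eta)\,\ud\eta$ and using the elementary pointwise bound $\bigl||\xi|^{s}-|\eta|^{s}\bigr|\lesssim |\xi-\eta|\,(|\xi-\eta|^{s-1}+|\eta|^{s-1})$, one lands immediately on two convolutions to which Young's inequality ($L^{1}\ast L^{2}\to L^{2}$) applies, producing the $\calY^{1}$ norms directly without ever passing through $L^{\infty}$. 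Your approach is faster if one is willing to quote Kato--Ponce as a black box; the paper's is fully self-contained and in fact simpler than the paraproduct sketch you outline at the end --- no Littlewood--Paley decomposition or low/high/high--high case analysis is needed, just the mean-value bound on the symbol and a single application of Young's inequality.
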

\begin{proof}
	Note that the Fourier transform of $H:=[\Lmb^{s}, f\cdot\nb] g$ is given by \begin{equation*}
			\begin{split}
				\widehat{H}(\xi)= \int (|\xi|^{s}-|\eta|^{s}) \widehat{f}(\xi-\eta) i\eta \widehat{g}(\eta)  \,\ud\eta. 
			\end{split}
		\end{equation*} Proceeding similarly with the proof of Lemma \ref{lem:comm-1}, we obtain that \begin{equation*}
			\begin{split}
				\left| \widehat{H}(\xi) \right| \le  C \int (|\xi-\eta|^{s-1}+|\eta|^{s-1}) |\xi-\eta||\widehat{f}(\xi-\eta)| |\eta \widehat{g}(\eta)|  \,\ud\eta 
			\end{split}
		\end{equation*} which gives with Young's convolution inequality that \begin{equation*}
			\begin{split}
				\nrm{H}_{L^2} \le C(\nrm{f}_{\calY^{1}} \nrm{g}_{H^{s}} + \nrm{g}_{\calY^{1}} \nrm{f}_{H^{s}} ).
			\end{split}
	\end{equation*} This gives the estimate. 
\end{proof}

\begin{proof}[Proof of Theorem \ref{thm:lwp}]
In the proof below we shall find the following vector calculus identity useful: 
\begin{equation}\label{eq:curl-times}
	\begin{split}
		\nb \times (B \times F) = ( \nb\cdot F + F \cdot \nb) B - (\nb\cdot B + B\cdot\nb) F. 
	\end{split}
\end{equation}	
We compute 
\begin{equation*}
		\begin{split} 
			\frac12 \frac{\ud}{\ud t} \nrm{\Lmb^sB}_{L^{2}}^{2} &= - \int_{ \Bbb R^3}  \Lmb^sB \cdot \Lmb^s \nb\times ((\nb\times\Gmm B)\times B) \, \ud x 
		\end{split}
	\end{equation*}
	The main contribution on the right hand side comes from \begin{equation*}
		\begin{split}
			I_1 := - \int_{ \Bbb R^3}  \Lmb^sB \cdot \nb\times ((\nb\times\Gmm \Lmb^s B)\times B) \, \ud x 
		\end{split}
	\end{equation*} and \begin{equation*}
	\begin{split}
		I_2:= - \int_{ \Bbb R^3}  \Lmb^sB \cdot\nb\times ((\nb\times\Gmm B)\times  \Lmb^s B) \, \ud x .
	\end{split}
\end{equation*} We first treat $I_{1}$: defining $F = \nb\times \Lmb^sB$ for simplicity, we have that \begin{equation*}
\begin{split}
	I_1 &= - \int_{ \Bbb R^3}  F\cdot (\Gmm F\times B) \, \ud x = \int_{ \Bbb R^3}  \epsilon_{ijk} \Gmm^{\frac12}(F^{i} B^{k})  \, \Gmm^{\frac12} F^{j}\,\ud x \\
	& = \int_{ \Bbb R^3}  \epsilon_{ijk} \Lmb^{\frac12} [\Gmm^{\frac12},B^{k}]F^{i} \,  \Lmb^{-\frac12} \Gmm^{\frac12} F^{j}\,\ud x, 
\end{split}
\end{equation*} 
 {where $ \epsilon_{ijk} $ is the totally skew-symmetric tensor with the normalization $\epsilon_{123}=1$, and 
we used }
 \begin{equation*}
\begin{split}
	 \int_{ \Bbb R^3}   \epsilon_{ijk} B^{k} \Gmm^{\frac12}(F^{i}) \, \Gmm^{\frac12} (F^{j})\,\ud x  = 0
\end{split}
\end{equation*} (repeated indices are being summed).  { We write \begin{equation*}
\begin{split}
	\int_{ \Bbb R^3}   \Lmb^{\frac12} [\Gmm^{\frac12},B^{k}]F^{i} \,  \Lmb^{-\frac12} \Gmm^{\frac12} F^{j}\,\ud x & = 	\int_{ \Bbb R^3} \mathbf{P}_{\ge 1}  \left( \Lmb^{\frac12} [\Gmm^{\frac12},B^{k}]F^{i} \right)\,  \Lmb^{-\frac12} \Gmm^{\frac12} F^{j}\,\ud x  \\
	&\qquad + 	\int_{ \Bbb R^3}  \Lmb^{\frac12} \mathbf{P}_{<1} \left(  \Lmb^{\frac12} [\Gmm^{\frac12},B^{k}]F^{i}\right) \,  \Lmb^{-1} \Gmm^{\frac12} F^{j}\,\ud x 
\end{split}
\end{equation*} }Applying Lemma \ref{lem:comm-1},  {and using the assumption \eqref{as1}},  we have that 
\begin{equation*}
\begin{split}
	&\left| \int_{ \Bbb R^3}   \epsilon_{ijk} \Lmb^{\frac12} [\Gmm^{\frac12},B^{k}]F^{i} \,  \Lmb^{-\frac12} \Gmm^{\frac12} F^{j}\,\ud x \right| \\
	&\qquad \le  {C\sum_{i,j,k}\left(\nrm{ \mathbf{P}_{\ge1}  \Lmb^{\frac12} [\Gmm^{\frac12},B^{k}]F^{i}}_{L^{2}}\nrm{\Lmb^{-\frac12} \Gmm^{\frac12} F^{j}}_{L^2} + \nrm{ \Lmb^{\frac12} \mathbf{P}_{<1}  \Lmb^{\frac12} [\Gmm^{\frac12},B^{k}]F^{i}}_{L^{2}}\nrm{\Lmb^{-1} \Gmm^{\frac12} F^{j}}_{L^2}\right)} \\
			& \qquad  \le C { \nrm{B}_{  \calY^1}  }  \| \Lmb^s B\|_{L^2}   {\left(    \nrm{ \Lmb^{-\frac12} \Gmm^{\frac12}\nb\times \Lmb^sB }_{L^2} +  \nrm{ \Lmb^{-1} \Gmm^{\frac12}\nb\times \Lmb^sB }_{L^2} \right) }\\
			&\qquad \le C {\nrm{B}_{  \calY^1} }\nrm{B}_{H^{s}}^{2}. 
\end{split}
\end{equation*} We now treat $I_{2}$: using \eqref{eq:curl-times}, 
\begin{equation*}
\begin{split}
	I_2 &=  - \int_{ \Bbb R^3}  \Lmb^s B \cdot (\Lmb^s B \cdot \nb (\nb\times\Gmm B)) \, \ud x + \int_{ \Bbb R^3}  \Lmb^s B \cdot ((\nb\times\Gmm B)\cdot\nb) \Lmb^s B \, \ud x \\
	&=I_{2,a} +I_{2,b}.
\end{split}
\end{equation*} The term $I_{2,a}$ can  be directly bounded by \begin{equation*}
\begin{split}
	I_{2,a}\le C\nrm{ \nb^{2}\Gmm B }_{L^{\infty}} \nrm{B}_{H^{s}}^{2} \le C {\nrm{B}_{  \calY^1 } \| B\|_{{H^{s}}}^2},
\end{split}
\end{equation*} using  {\eqref{as1}} and $s> \frac{5}{2}$. For the second term, we note that \begin{equation*}
\begin{split}
	I_{2,b} = \int_{ \Bbb R^3}   ((\nb\times\Gmm B)\cdot\nb) \frac{|\Lmb^s B|^{2}}{2} \, \ud x  = - \int_{ \Bbb R^3}  (\nb\cdot (\nb\times\Gmm B) ) \frac{|\Lmb^s B|^{2}}{2} \, \ud x = 0.  
\end{split}
\end{equation*}

Finally, we treat the remainder.  Using the identity \eqref{eq:curl-times},  we observe that 
\begin{equation}\label{eq:key}
	\begin{split}
		& I_1+ I_2+ \int_{ \Bbb R^3}  \Lmb^s B \cdot \Lmb^s \nb\times ((\nb\times\Gmm B)\times B) \, \ud x \\
		&\quad = \int_{ \Bbb R^3}  \Lmb^s B \cdot\left\{ \Lmb^s( \nb\times(G\times B)) - \nb\times(\Lmb^sG\times B) - \nb\times(G\times \Lmb^s B)  \right\}\,\ud x  \\
		& \quad=\int_{ \Bbb R^3}  \Lmb^s B 	\cdot\left\{   \Lmb^s( B\cdot\nb G - G\cdot\nb B ) - B\cdot\nb \Lmb^sG + \Lmb^sG\cdot\nb B - \Lmb^s B\cdot\nb G + G\cdot\nb \Lmb^s B\right\} dx\\
		&\quad=\int_{ \Bbb R^3}  \Lmb^s B 	\cdot  	\{ [\Lmb^s, B\cdot\nb ] G \} dx
		-\int_{ \Bbb R^3}  \Lmb^s B 	\cdot  \{ [\Lmb^s,G\cdot\nb] B \} dx   \\
		&\qquad\qquad - \int_{ \Bbb R^3}  \Lmb^s B 	\cdot  \{(\Lmb^s B\cdot\nb ) G\} dx  + \int_{ \Bbb R^3}  \Lmb^s B 	\cdot  \{ \Lmb^{s} G \cdot  \nb B\} dx\\
		&\quad:=K_1+K_2+K_3+K_4,
			\end{split}
\end{equation}  where   we set $G := \nb\times\Gmm B$. We note that \begin{equation*}
\begin{split}
	\nrm{\Lmb^s G}_{L^2} \le C \nrm{B}_{H^{s}}, \qquad \nrm{\nb G}_{L^\infty} \le C\nrm{B}_{{\calY}^1}. 
\end{split}
\end{equation*} To see the first inequality, \begin{equation*}
\begin{split}
	\nrm{\Lmb^s G}_{L^2}^{2} & \le C\int |\xi|^{2s} |\xi\times \gmm(|\xi|) \hat{B}(\xi) |^{2} \ud \xi \\
	& \le C\int |\xi|^{2s+2} (|\xi|^{-1}+|\xi|^{-2})^{2} |\hat{B}(\xi) |^{2} \ud \xi \le C\nrm{B}_{H^{s}}^{2},
\end{split}
\end{equation*} using the assumption \eqref{as1} for $\gmm$. The proof of the other inequality is similar.

Then, we can bound $K_4$ by 
$$
\left|K_4\right| \le C \nrm{\Lmb^sB}_{L^2} \nrm{\Lmb^sG}_{L^2} \nrm{\nb B}_{L^\infty} \le C  \| B \|_{     \dot{\calY}^1 } \nrm{B}_{H^{s}}^{2}.
$$
Next, we estimate $K_3$ easily as follows:
\begin{equation*}
\begin{split}
	\left|K_3\right|\le   \int_{ \Bbb R^3} | \Lmb^s B \cdot \{  \Lmb^s B\cdot\nb G \} |\,\ud x      \le C \nrm{ \nb G }_{L^\infty}\nrm{B}_{H^{s}}^{2} \le  C \| B \|_{     \dot{\calY}^1 } \nrm{B}_{H^{s}}^{2}.
\end{split}
\end{equation*}  Lastly, using Lemma \ref{lem:comm-4}, we can directly estimate the first two terms by \begin{equation*}
\begin{split}
\left|K_1+K_2\right| &\le  \|  \Lmb^s B \|_{L^2} 	\left(\nrm{  [\Lmb^s, B\cdot\nb ] G }_{L^2} + \nrm{  [\Lmb^s,G\cdot\nb] B  }_{L^{2}} \right) \\
&\le    C  { (  \| B \|_{\calY^{1}} + \| G \|_{\calY^{1}} )}  (  \nrm{B}_{H^{s}} + \nrm{G}_{H^{s}} )  \|  \Lmb^s B \|_{L^2} \\
&\le C \| B\|_{\calY^{1}} \| B \|_{H^s}^2.
\end{split}
\end{equation*}
This finishes the proof of 
  \begin{equation*}
  \label{locest}\begin{split}
	 \frac{\ud}{\ud t} \nrm{ B}_{ {\dot{H}^s}}\le C \| B\|_{\calY^{1}}  \nrm{B}_{H^{s}}. 
\end{split}
\end{equation*}  {On the other hand, from \eqref{eq:EH-active}, it is straightforward to obtain \begin{equation*}
\begin{split}
	\frac{\ud}{\ud t} \nrm{B}_{L^{2}} \le C \nrm{\nb^2\Gmm[B]}_{L^{\infty}}\nrm{B}_{L^{2}} \le C \| B\|_{\calY^{1}}  \nrm{B}_{H^{s}}.
\end{split}
\end{equation*} Combining the above estimates together with \eqref{emb},} we have
  \begin{equation*}
  \label{locest1}\begin{split}
	 \frac{\ud}{\ud t} \nrm{ B}_{H^s} \le C \nrm{B}_{H^{s}}^2. 
\end{split}
\end{equation*} Hence, there exists $T>0$ depending only on $\nrm{B_{0}}_{H^{s}}$, such that \begin{equation}\label{eq:apriori}
\begin{split}
	\sup_{t\in[0,T]}\nrm{B(t,\cdot)}_{H^{s}} \le 2\nrm{B_0}_{H^{s} }.
\end{split}
\end{equation} 

Given the a priori estimate, it is a simple matter to prove existence and uniqueness of a solution belonging to $L^\infty([0,T];H^{s} )$. For uniqueness, one may assume that there are two solutions $B$ and $\tilde{B}$ to \eqref{eq:EH} with the same initial data $B_{0}$, and that $B, \tilde{B} \in L^\infty([0,T];H^{s})$ for some $T>0$. Proceeding similarly as in the proof of \eqref{eq:apriori}, using the equation satisfied by the difference $B - \tilde{B}$, one can prove \begin{equation*}
	\begin{split}
		\frac{d}{dt} \nrm{ B - \tilde{B} }_{L^{2}} \le C( \nrm{B}_{L^\infty([0,T];H^{s})} + \nrm{\tilde{B}}_{L^\infty([0,T];H^{s} )} ) \nrm{ B - \tilde{B} }_{L^{2}} . 
	\end{split}
\end{equation*} (See \cite{CJO2,CJ3} for details of this argument for closely related systems.) For existence, we consider the regularized systems \begin{equation}\label{eq:EH-v}
\left\{
\begin{aligned}
	& \rd_t B + \nb\times ( ( \nb\times \Gmm [B] ) \times B) - \nu \lap B = 0 ,  \\
	& \nabla \cdot B = 0, \\
	& B(t=0) = B_0, 
\end{aligned}
\right.
\end{equation} for each $\nu>0$. Given $B_0 \in H^{s}$, it is straightforward to show local existence of a solution $B^{(\nu)}$ to \eqref{eq:EH-v} satisfying $B^{(\nu)} \in L^\infty([0,T_\nu];H^{s})$ for some $T_\nu>0$, using the mild formulation; one can follow the arguments of \cite{CDL}. Since the a priori estimate \eqref{eq:apriori} applies to the viscous solutions $B^{(\nu)}$, the lifespan $T_\nu$ can be bounded from below by some $T>0$ independent of $\nu>0$. There exists a weak limit $B^{(\nu)} \to B$ in $L^\infty([0,T];H^{s})$ (by taking a sub-sequence if necessary), and the limit is a solution to \eqref{eq:EH}. 

Lastly, we argue that the solution $B$ actually belongs to $C([0,T];H^{s})$. Using time-translation and time-reversal symmetries, it suffices to prove strong convergence $B(t)\to B_0$ for $t\to 0^+$. To begin with, from uniform boundedness, we obtain weak convergence; this gives that \begin{equation*}
	\begin{split}
		\nrm{B_0}_{ H^s} \le \liminf_{t\to 0^+} \nrm{B(t)}_{ H^s }. 
	\end{split}
\end{equation*} On the other hand, from \eqref{locest1}, we see that 
\begin{equation*}
\begin{split}
	\nrm{B_0}_{ H^s  } - Ct \le \nrm{B(t)}_{{ H^s }} \le \nrm{B_0}_{ H^s } + Ct, \quad \forall t \in [0,T_0]
\end{split}
\end{equation*} for some $T_0, C>0$ depending only on $\nrm{B_0}_{ H^s  }$. This gives norm convergence $\nrm{B(t)}_{ H^s } \to \nrm{B_0}_{ H^s }$, which together with weak convergence concludes strong convergence. The proof of (i) is now complete. \\
The proof of (ii) immediately  follows from the inequality
$$
\|B(t) \|_{H^s} \le \|B_0\|_{H^s} e^{C\int_0 ^t \|B(s) \|_{\calY^{1}} ds },
$$
which is obtained from \eqref{locest}. \end{proof}

 {
\begin{proof}[Proof of Theorem B]
	We shall simply establish an $H^{s}$--a priori estimate for \eqref{eq:EH-diss}, and omit the proof of existence and uniqueness. Similarly as in the proof of Theorem A, we compute 
	\begin{equation*}
		\begin{split} 
			\frac12 \frac{\ud}{\ud t} \nrm{\Lmb^sB}_{L^{2}}^{2} + \nrm{\Lmb^{s+\frac{b}{2}} B }_{L^{2}}^{2} &= - \int_{ \Bbb R^3}  \Lmb^sB \cdot \Lmb^s \nb\times ((\nb\times\Lmb^{-a} B)\times B) \, \ud x . 
		\end{split}
	\end{equation*}
	The main terms on the right hand side are given by\begin{equation*}
		\begin{split}
			I_1 := - \int_{ \Bbb R^3}  \Lmb^sB \cdot \nb\times ((\nb\times\Lmb^{-a} \Lmb^s B)\times B) \, \ud x 
		\end{split}
	\end{equation*} and \begin{equation*}
		\begin{split}
			I_2:= - \int_{ \Bbb R^3}  \Lmb^sB \cdot\nb\times ((\nb\times\Lmb^{-a} B)\times  \Lmb^s B) \, \ud x .
		\end{split}
	\end{equation*}  With $F := \nb\times \Lmb^sB$, recall that \begin{equation*}
		\begin{split}
			I_1 = \sum_{i,j,k} \int_{ \Bbb R^3}  \epsilon_{ijk}  [\Lmb^{-\frac{a}{2}},B^{k}] F^{i} \, \Lmb^{-\frac{a}{2}} F^{j}\,\ud x = \sum_{i,j,k} \int_{ \Bbb R^3}  \epsilon_{ijk} \Lmb^{\frac{1}{2}} [\Lmb^{-\frac{a}{2}},B^{k}] F^{i} \, \Lmb^{- \frac{a+1}{2}} F^{j}\,\ud x, 
		\end{split}
	\end{equation*} which can be bounded by \begin{equation*}
	\begin{split}
		\left|I_1\right|\le C \nrm{ \Lmb^{\frac{1}{2}} [\Lmb^{-\frac{a}{2}},B^{k}] \Lmb^{\frac{a+1}{2}} }_{L^2\to L^2} \nrm{\Lmb^{- \frac{a+1}{2}} F^{j}}^{2}_{L^{2}}. 
	\end{split}
\end{equation*} To bound the operator norm of the commutator, observe that \begin{equation*}
\begin{split}
	 \Lmb^{\frac{1}{2}} [\Lmb^{-\frac{a}{2}},B^{k}] \Lmb^{\frac{a+1}{2}}  = \Lmb^{\frac{1-a}{2}}[B, \Lmb^{\frac{1+a}{2}}] - \Lmb^{\frac{1}{2}}[B,\Lmb^{\frac{1}{2}}]
\end{split}
\end{equation*} and \begin{equation*}
\begin{split}
	\nrm{\Lmb^{\frac{1-a}{2}}[B, \Lmb^{\frac{1+a}{2}}] }_{L^2\to L^2} + \nrm{\Lmb^{\frac{1}{2}}[B,\Lmb^{\frac{1}{2}}]}_{L^2\to L^2} \le C\nrm{B}_{\dot{\calY}^{1}}. 
\end{split}
\end{equation*} This estimate can be proved easily using the Fourier transform and following the arguments of Lemma \ref{lem:comm-1} above. Therefore, \begin{equation*}
\begin{split}
	\left|I_1\right|\le C \nrm{B}_{\dot{\calY}^{1}} \nrm{ \Lmb^{s+ \frac{1-a}{2}} B}_{L^2}^2\le C \nrm{B}_{H^{s}} \nrm{ \Lmb^{s+ \frac{1-a}{2}} B}_{L^2}^2.
\end{split}
\end{equation*} Next, recall that with a cancellation, we have simply  
	\begin{equation*}
		\begin{split}
			I_2 &=  - \int_{ \Bbb R^3}  \Lmb^s B \cdot (\Lmb^s B \cdot \nb (\nb\times\Lmb^{-a} B)) \, \ud x. 
		\end{split}
	\end{equation*} Then, it is not difficult to obtain the bound \begin{equation*}
		\begin{split}
			\left| I_{2} \right| \le  C \nrm{\nb^2 \Lmb^{-a}B}_{L^{\infty}} \nrm{B}_{\dot{H}^{s}}^{2} \le C \nrm{B}_{H^{s}}^{3}, 
		\end{split}
	\end{equation*} where we have used that $s> \frac{3}{2}+2-a$. Finally, proceeding in the same way as \eqref{eq:key}, it is not difficult to prove the following estimate for the remainder: \begin{equation*}
	\begin{split}
		\left| I_1+ I_2+ \int_{ \Bbb R^3}  \Lmb^s B \cdot \Lmb^s \nb\times ((\nb\times \Lmb^{-a} B)\times B) \, \ud x \right| \le C \nrm{B}_{H^{s}}^{3}. 
	\end{split}
\end{equation*} Therefore, we conclude that \begin{equation*}
\begin{split}
	\frac12 \frac{\ud}{\ud t} \nrm{\Lmb^sB}_{L^{2}}^{2} + \nrm{\Lmb^{s+\frac{b}{2}} B }_{L^{2}}^{2} & \le C\nrm{B}_{{H}^{s}}\nrm{\Lmb^{s+\frac{1-a}{2}}B}_{L^2}^{2} +  C\nrm{B}_{H^{s}}^{3}  \le C\nrm{B}_{{H}^{s}}^{1+2\alp}\nrm{\Lmb^{s+\frac{b}{2}}B}_{L^2}^{2(1-\alp)} +  C\nrm{B}_{H^{s}}^{3}
\end{split}
\end{equation*} for some $0<\alp<1$ since $1-a<b$. Using Young's inequality, we can absorb the $\nrm{\Lmb^{s+\frac{b}{2}}B}_{L^2}$--term to the left hand side. Together with an $L^2$ estimate for $B$, this finishes the proof of the a priori estimate in $H^{s}$. 
\end{proof}
}

\section{{Discussion and open problems}}\label{subsec:discuss}

\medskip

\noindent \textbf{Extensions}. The proof of local well-posedness can be adapted to three-dimensional domains without boundary $\bbR^{k} \times \bbT^{3-k}$  for any $0\le k <3$. Moreover, one may consider the systems with a (fractional) dissipation term. It could be interesting to consider the \textit{critically dissipative} case, as an analogy with critically dissipative SQG equations studied for instance in \cite{crit1,crit2,crit3,crit4}. 

\medskip

\noindent  {\textbf{Propagation of $\dot{H}^{-1}$}. Given the formulation of the system in terms of $u$ and scaling of the conservation law, it is natural to add the assumption that $B \in \dot{H}^{-1}$. Indeed, $\nrm{B}_{\dot{H}^{-1}}$ is equivalent with $\nrm{u}_{L^2}$, and it is easy to see using the equation for $u$ that as long as $B \in L^\infty_t H^{s}$ with $s>\frac{5}{2}$, \begin{equation*}
		\begin{split}
			\frac{\ud}{\ud t} \nrm{B}_{\dot{H}^{-1}} \lesssim \nrm{B}_{\dot{H}^{-1}}. 
		\end{split}
	\end{equation*} That is, the assumption $B_{0} \in \dot{H}^{-1}$ propagates in time as long as the solution remains smooth. 
}

\medskip

\noindent \textbf{Blow-up criteria}. 
We remark that the blow-up criterion can be replaced with 
\begin{equation}\label{eq:blowup-2}
	\begin{split}
		\lim_{t\to T}\int_0^{t} \nrm{B(\tau,\cdot)}_{H^{\frac{5}{2}}}\, \ud\tau = +\infty. 
	\end{split}
\end{equation}  {To see this, we observe the logarithmic Sobolev estimate for $s>\frac{5}{2}$ 
\begin{equation}
\label{logsov}\begin{split}
	\nrm{B(\tau,\cdot)}_{\calY^{1}}\le C  \nrm{B(\tau,\cdot)}_{H^{\frac{5}{2}}} \log(10 +  \nrm{B(\tau,\cdot)}_{H^{s}}).
\end{split}
\end{equation} 
Then, one may obtain the a priori estimate 
\begin{equation*}
\begin{split}
	\frac{\ud}{\ud t} \nrm{B}_{H^{s}} \le C \nrm{B}_{H^{\frac{5}{2}}}  \nrm{B}_{H^{s}} \log(10 +  \nrm{B}_{H^{s}}), 
\end{split}
\end{equation*} from which the blow-up criterion \eqref{eq:blowup-2} {follows by applying the Gronwall lemma to the
differential inequality
$$ y^\prime   \le  a y\log y ,  \quad y=  10+\|B(t)\|_{H^s}, \quad a= C\|B(t)\|_{H^{\frac52}}.
$$ 
}}
{ The inequality \eqref{logsov}, in turn, can be verified as follows. We first estimate
\begin{align*}
 \int_{\Bbb R^3} |\xi| | \hat{f}| \ud\xi &\le \int_{\{ |\xi|<R\} }  (1+|\xi| ^2)^ {\frac54} | \hat{f}|  (1+|\xi| ^2)^ {-\frac34}  \ud\xi +   
\int_{\{ |\xi|\ge R\} }  (1+|\xi|^2 )^{\frac{s}{2}}  | \hat{f}|    (1+|\xi|^2 )^{\frac{1-s}{2}}  \ud\xi   \\
&  \le       \left( \int_{\{ |\xi|<R\} }  (1+|\xi| ^2)^ {\frac52} |   \hat{f}|^2 \ud\xi \right)^{\frac12} 
\left( \int_{\{ |\xi|<R\} }   (1+|\xi| ^2)^ {-\frac32}  \ud\xi \right)^{\frac12} \\
&\qquad + \left( \int_{\{|\xi| \ge R\}}   (1+|\xi|^2 )^{s}  | \hat{f}|^2 \ud\xi \right)^{\frac12}  \left( \int_{\{|\xi| \ge R\}}   (1+|\xi|^2 )^{1-s}  \ud\xi \right)^{\frac12 } \\
&\le C \| f\|_{H^{\frac52}} \log (1+R) +  C \| f\|_{H^s} R^{\frac52-s} \quad \forall R>0.
\end{align*}
Choosing $R= ( \|f\|_{H^s}/ \|f\|_{\frac52} )^{H^{\frac{2}{2s-5}}} $, we obtain \eqref{logsov}.
}

\medskip

\noindent \textbf{Issue of global well-posedness}. Global well-posedness of smooth solutions in the case of 3D Euler is a notoriously difficult open problem. In the electron--MHD case, finite time singularity formation is available for smooth and axisymmetric data (\cite{CWe}) but unfortunately local well-posedness (more precisely, uniqueness) is not known for the data used in the proof. It should be interesting to investigate the possibility of singularity formation for the interpolating models \eqref{eq:EH}. Of course, when the multiplier $\Gmm$ becomes ``very regular'', e.g. $\Gmm = (1 - \lap)^{-N}$ with $N$ large, then global well-posedness for smooth and decaying solutions can be {proved} using the conservation of energy.

\subsubsection*{Acknowledgments}{D.~Chae was supported partially  by NRF grant 2021R1A2C1003234. I.-J.~Jeong was supported by the NRF grant 2022R1C1C1011051.} We sincerely thank the anonymous referees for providing several helpful suggestions and pointing out errors as well as relevant references. 

\subsubsection*{Data availability statement} Data sharing not applicable to this article as no datasets were generated or analyzed during the current study.

\bibliographystyle{amsplain}


\end{document}